\def \rank{{\rm rank}}
\def \N{\mathbb{N}}
\def \R{\mathbb{R}}
\def \G{\mathcal{G}}
\def \A{\mathcal{A}}
\def \G{\mathcal G}
\def \int{{\rm int\,}}
\def \U{\mathcal U}
\def \G{\mathcal G}
\def \int{{\rm int\,}}
\def \U{\mathcal U}
\providecommand{\U}[1]{\protect \rule{.1in}{.1in}}
\newtheorem{theorem}{Theorem}[section]
\newtheorem{corollary}[theorem]{Corollary}
\newtheorem{definition}[theorem]{Definition}
\newtheorem{lemma}[theorem]{Lemma}
\newtheorem{notation}[theorem]{Notation}
\newtheorem{proposition}[theorem]{Proposition}
\begin{document}
	\title{On the explicit formula for Gauss-Jordan elimination}
	\author{Nam Van Tran, Julia Justino and Imme van den Berg}
	\address{namtv@hcmute.edu.vn; julia.justino@estsetubal.ips.pt; ivdb@uevora.pt }

		\keywords{		Gauss-Jordan elimination, minors.}
			\begin{abstract} The elements of the successive intermediate matrices of the Gauss-Jordan elimination procedure have the form of quotients of minors. Instead of the proof using identities of determinants of \cite{Li}, a direct proof by induction is given. 
		\end{abstract}
		\subjclass{15A09}
		\maketitle



		\section{Introduction}
		
		Gantmacher's book \cite{Gantmacher} on linear algebra contains an explicit formula for all elements $ a_{ij}^{(k)} $
		of the intermediate matrix obtained after $ k $ Gaussian operations applied to a matrix $ A=[a_{ij}] $, below and to the right of the $ k^{th} $ pivot. The formula is given in terms of quotients of minors, and follows when applying Gaussian elimination to the two minors, which happen to have common factors all wiping themselves out, except for $ a_{ij}^{(k)} $. Up to changing indices, the formula holds also for Gauss-Jordan elimination, and then a similar formula, with alternating sign, holds at the upper part of the intermediate matrices. A proof is given by Y. Li in \cite{Li}, using identities of determinants. To our opinion the notation used for minors in different parts of the matrix is somewhat confusing. We considered it worthwhile to present the result in the notation of \cite{Gantmacher}, with a direct proof by induction. Indeed, up to some changes, it is possible to extend the method of simultaneous Gaussian elimination of minors to all elements of the $ k^{th} $ intermediate matrix.
		
		The status of the explicit formula for the Gauss-Jordan elimination procedure seems to be uncertain. As regards to the formula for the lower part of the intermediate matrices Gantmacher refers to \cite{Grossman}; \cite{Scott} contains some historical observations and presents a proof using identities of determinants.  
		
		In \cite{Scott} the explicit formula for Gaussian elimination was applied to error analysis. We came across the explicit formula for Gauss-Jordan elimination also in relation to error analysis \cite{Imme jus and Nam}. Here we apply the formula to prove that principal minors satisfying a maximality property are non-zero. This has also some numerical relevance, for a consequence is that maximal pivots are automatically non-zero.

		\section{The explicit formula for Gauss-Jordan elimination}\label{sectionGJ}
		We start with some definitions and notations related to the Gauss-Jordan operations, where we use the common representation by matrix multiplications. It is convenient when the matrix is diagonally eliminable, i.e. all pivots lie on the principal diagonal, and we verify that this can be assumed without restriction of generality through a condition on minors, and that then the pivots are non-zero indeed. Theorem \ref{cmcttruyhoi} is the main theorem and gives explicit expressions for the intermediate matrices of the Gauss-Jordan procedure. Explicit formulas for the matrices representing the Gauss-Jordan operations follow directly and are given in Theorem \ref{Gmat}. 
		
		Theorem \ref{diagonally} is a straightforward consequence of Theorem \ref{cmcttruyhoi}, and states, together with Proposition \ref{changematran moi},  that without restriction of generality we may impose a maximality condition on the principal minors, and then the pivots are also maximal, and nonzero indeed.
		
		We will always consider $ m\times n $ matrices with $m,n\in \mathbb{N},m,n\geq 1$.	We denote by $M_{m, n}(\mathbb{R})$ the set of all $m\times n$
		matrices over the field $\mathbb{R}$.

		\begin{definition}\label{defgau}
			Let $A=[a_{ij}]_{m\times n}\in M_{m,n}(\mathbb{R})$  be of \rank $r\geq 1$. Assume that $a_{11}\not=0$.  We let 
			$\mathcal{G}_{1}=
			\begin{bmatrix}
				g_{ij}^{(1)}%
			\end{bmatrix}%
			_{m\times m}  $
			be the matrix which corresponds to the multiplication of the entries of the first line of $ A $ by $1/ a_{11} $, such that the first pivot of $A^{(1)}\equiv \mathcal{G}_{1} A=[a^{(1)}_{ij}]_{m\times n}$ becomes $ a^{(1)}_{11} =1$. This means that 	
			\begin{equation*}
				\begin{array}{l}
					\mathcal{G}_{1}=%
					\begin{bmatrix}
						g_{ij}^{(1)}%
					\end{bmatrix}%
					_{m\times m}=%
					\begin{bmatrix}
						\frac{1}{a_{11}} & 0 & \cdots & 0 \\ 
						0 & 1 & \cdots & 0 \\ 
						\vdots & \vdots & \ddots & \vdots \\ 
						0 & 0 & \cdots & 1
					\end{bmatrix}.%
				\end{array}
			\end{equation*}
			Assume $a^{(1)}_{22}\not=0$. Let $ \mathcal{G}_{2} $ be the matrix which corresponds to the creation of zero's in the first column of $ A^{(1)}$, except for $a_{11}^{(1)}$, and let $ A^{(2)}=\mathcal{G}_{2}A^{(1)}=[a^{(2)}_{ij}]_{m\times n}$, i.e.
			\begin{equation*}
				\mathcal{G}_{2}=%
				\begin{bmatrix}
					g_{ij}^{(2)}%
				\end{bmatrix}%
				_{m\times m}=%
				\begin{bmatrix}
					1 & 0 & \cdots & 0 \\ 
					-a_{21} & 1 & \cdots & 0 \\ 
					\vdots & \vdots & \ddots & \vdots \\ 
					-a_{m1} & 0 & \cdots & 1%
				\end{bmatrix},
			\end{equation*}
			and 
			\begin{equation*}
				A^{(2)}=%
				\begin{bmatrix}
					a_{ij}^{(2)}%
				\end{bmatrix}%
				_{m\times n}=%
				\begin{bmatrix}
					1 & a^{(1)}_{12} & \cdots & a^{(1)}_{1n} \\ 
					0 & a^{(2)}_{22} & \cdots & a^{(2)}_{2n} \\ 
					\vdots & \vdots & \ddots & \vdots \\ 
					0 & a^{(2)}_{m2} & \cdots & a^{(2)}_{mn}%
				\end{bmatrix}.
			\end{equation*}	
			Assume  that $ \mathcal{G}_{2k}=[g^{(2k)}]_{m\times m}$ and $A^{(2k)}=[a^{(2k)}_{ij}]_{m\times n} $ are defined for $ k<r $, and $a^{(2k)}_{k+1k+1} \not=0$.  The matrix $ \mathcal{G}_{2k+1} $ corresponds to the multiplication of row $ k+1$ of $ A^{(2k)} $ by $1/ a^{(2k)}_{k+1k+1} $, leading to $ A^{(2k+1)}\equiv\mathcal{G}_{2k+1}A^{(2k)} $, and the matrix $ \mathcal{G}_{2k+2} $ corresponds to transforming the entries of column $ k $ of $ A^{(2k+1)} $ into zero, except for the entry $ a^{(2k+1)}_{k+1k+1}(=1) $, resulting in $ A^{(2k+2)}\equiv\mathcal{G}_{2k+2}A^{(2k+1)} $. So we have
			$ 
			\mathcal{G}_{2k+1}=%
			\begin{bmatrix}
				g_{ij}^{(2k+1)}%
			\end{bmatrix}%
			_{m\times m}\text{,\quad where}%
			$%
			\begin{equation}\label{ct g2k+1}
				g_{ij}^{(2k+1)}=%
				\begin{cases}
					\quad 1 & \mbox{ if }i=j\not=k+1 \\ 
					\quad 0 & \mbox{ if }i\not=j \\ 
					\dfrac{1}{a^{(2k)}_{k+1k+1}} & \mbox{ if }i=j=k+1%
				\end{cases}
			\end{equation}
			and $%
			\begin{array}{rr}
				\mathcal{G}_{2k+2}=%
				\begin{bmatrix}
					g_{ij}^{(2k+2)}%
				\end{bmatrix}%
				_{m\times m}\text{,\quad where} & 
			\end{array}%
			$
			\begin{equation}\label{ct g2k+2} 
				g_{ij}^{(2k+2)}=%
				\begin{cases}
					0 & \mbox{ if }j\not \in \{i,k+1\} \\ 
					1 & \mbox{ if }i=j \\ 
					-a^{(2k+1)}_{ik+1}
					\medskip  &  \mbox{ if }i\neq k+1,j=k+1%
				\end{cases}.
			\end{equation}	
			For $1\leq q\leq 2r$ we call the matrix $ A^{(q)} $ the \emph{$ q^{th} $ Gauss-Jordan intermediate matrix}, and the matrix $\G_{q} $ is called the \emph{$ q^{th} $ Gauss-Jordan operation matrix}. We write $ \mathcal{G}=\mathcal{G}_{2r}\mathcal{G}_{2r-1}\cdots\mathcal{G}_{1}$.
		\end{definition}
		The product $\mathcal{G}_{2k+1}A^{(2k)}$ corresponds to the Gaussian
		operation of multiplying the $(k+1)^{th}$ row of the matrix $A^{(2k)}$ by the non-zero
		scalar $\dfrac{1}{a^{(2k)}_{k+1k+1}}$ and the product $\mathcal{G}_{2k}A^{(2k-1)}$ corresponds to the repeated
		Gauss-Jordan operation of adding a scalar multiple of a row to some other row of $A^{(2k-1)}$.
		
		\begin{definition}
			\label{matrixofgaussJordanelimination1} Assume $A=[a_{ij}]_{m\times n}\in M_{m,n}(\mathbb{R}%
			) $ has \rank $r\geq 1$. Then $A$ is called 	\emph{diagonally eliminable up to $ r $} if $ a^{(2k-2)}_{kk}\not=0 $ for $1\leq k\leq r$; if $ r=m\leq n $ we say that $ A $ is \emph{diagonally eliminable}. 
		\end{definition}
		
		\begin{notation}
			\label{kyhieu2}Let $A\in M_{m,n}(\mathbb{R})$. For each $k\in \N $ such that $1\leq k\leq \min\{m, n\},$ let $1\leq i_{1}<\dots <i_{k}\leq m$ and $1\leq
			j_{1}<\dots <j_{k}\leq n$. 
			
			\begin{enumerate}
				\item We denote the $k\times k$ submatrix  of $A$ consisting of the rows with indices $\{i_{1},\dots ,i_{k}\}$ and columns with indices $\{j_{1},\dots ,j_{k}\}$ by $A^{i_{1}\dots i_{k}}_{j_{1}\dots j_{k}}$. 
				
				\item We denote the corresponding $k\times k$ minor by $m^{i_{1}\dots i_{k}}_{j_{1}\dots j_{k}}=\det\left( A^{i_{1}\dots i_{k}}_{j_{1}\dots j_{k}}\right) $.

				\item \textrm{For $ 1\leq k\leq \min\{m,n\}$ we may denote the principal minor of order $ k $ by $m_{k}=m_{1\cdots k}^{1\cdots k}$. We define formally $m_{0}=1$}.
			\end{enumerate}
		\end{notation}
		
		Let $ A $ be a matrix of \rank $ r\geq 1 $. It is well-known and not difficult to see that up to changing rows and columns one may always assume that the principal minors up to $ r $ are all non-zero. Proposition \ref{propa2k} shows that this condition is equivalent to being diagonally eliminable, and gives also a formula for the pivots. The proposition is a consequence of the following lemma; its proof uses the idea found in \cite{Gantmacher}, on how the value of certain elements of the intermediate matrices can be related to minors by simplifying determinants. 
		\begin{lemma}\label{lemmkprod}Let $A=[a_{ij}]_{m\times n}\in M_{m,n}(\mathbb{R}) $ be of \rank $ r> 1$. Assume that  $ a_{11},a^{(1)}_{22},\dots, a^{(2k-2)}_{kk}$  are non-zero for $ 1\leq k<r $. Then for $1\leq k<r$ it holds that
			\begin{equation}\label{mkprod}
				m_{k+1}=a_{11}a^{(2)}_{22}\cdots a^{(2k-2)}_{kk}a^{(2k)}_{k+1k+1}.
			\end{equation}
			As a consequence $a_{11},a^{(1)}_{22},\dots, a^{(2k-2)}_{kk}, a^{(2k)}_{k+1k+1}\not=0$ if and only if $m_1,\dots, m_{k+1}\not=0$.  
		\end{lemma}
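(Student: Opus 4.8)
The plan is to prove the product formula (\ref{mkprod}) by first establishing that the $(k+1)^{th}$ pivot, taken before normalization, is the ratio of consecutive leading principal minors, $a^{(2k)}_{k+1k+1}=m_{k+1}/m_{k}$, and then telescoping. The mechanism, following \cite{Gantmacher}, is to track how the determinant of the leading $(k+1)\times(k+1)$ submatrix evolves as the Gauss-Jordan operations are applied: each operation acts on this block as an elementary row operation whose effect on the determinant is transparent, and the product of these effects simplifies to give the pivot.

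Concretely, I would write $B^{(q)}$ for the top-left $(k+1)\times(k+1)$ block of $A^{(q)}$, so that $\det B^{(0)}=m_{k+1}$, and evaluate $\det B^{(2k)}$ in two ways. On one hand, for $0\le j<k$ the operation matrices $\G_{2j+1}$ and $\G_{2j+2}$ act on this block as their defining row operations restricted to the first $k+1$ rows and columns, which is legitimate because the multipliers $a^{(2j+1)}_{ij+1}$ all lie in columns $\le k$: the scaling $\G_{2j+1}$ multiplies row $j+1$, and hence $\det$, by $1/a^{(2j)}_{j+1j+1}$, while the elimination $\G_{2j+2}$ only adds multiples of row $j+1$ to the other rows and so leaves $\det$ unchanged. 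Running through $j=0,\dots,k-1$ therefore gives $\det B^{(2k)}=m_{k+1}\big/\big(a_{11}a^{(2)}_{22}\cdots a^{(2k-2)}_{kk}\big)$. On the other hand, after $k$ complete Gauss-Jordan steps the columns $1,\dots,k$ are fully reduced, so the leading $k\times k$ corner of $A^{(2k)}$ is the identity and $a^{(2k)}_{k+1\,j}=0$ for $j\le k$; thus $B^{(2k)}$ is block upper triangular with diagonal blocks $I_k$ and $a^{(2k)}_{k+1k+1}$, whence $\det B^{(2k)}=a^{(2k)}_{k+1k+1}$. Equating the two expressions yields (\ref{mkprod}).

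For the consequence I would read (\ref{mkprod}) telescopically as $m_{j+1}=m_j\,a^{(2j)}_{j+1j+1}$ for $0\le j\le k$, where $m_0=1$ and $m_1=a_{11}$. A short induction on $j$ then produces the equivalence: if the pivots through index $j$ are defined and nonzero, this relation gives $a^{(2j)}_{j+1j+1}=m_{j+1}/m_j$, so (for $m_j\neq0$) the vanishing of the pivot is equivalent to the vanishing of $m_{j+1}$, and the nonvanishing of $m_{j+1}$ both keeps the next operation matrices defined and lets the induction proceed to the following step.

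I expect the main obstacle to be the bookkeeping behind the two structural claims. First, one must justify that applying the global operations $\G_{2j+1},\G_{2j+2}$ and then extracting the leading block coincides with applying the restricted operations directly to $B^{(q)}$; the subtlety is that $\G_{2j+2}$ clears entries both above and below the pivot, yet every such move is still a determinant-preserving row operation on the block. Second, one must verify that once a column has been reduced it is not disturbed by later steps, which is exactly what forces the corner of $A^{(2k)}$ to equal $I_k$. Both facts are routine, but I would isolate the ``columns stay reduced'' statement as a small preliminary induction on the step number so that the determinant computation above can be carried out cleanly.
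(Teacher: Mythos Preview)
Your argument is correct and is essentially the paper's own proof: both apply the first $2k$ Gauss-Jordan operations to the leading $(k+1)\times(k+1)$ block, record that only the scalings $\G_{2j+1}$ change the determinant (by the reciprocal pivot $1/a^{(2j)}_{j+1\,j+1}$), and then read off $a^{(2k)}_{k+1\,k+1}$ from the block-triangular shape of the reduced block. Your extra care about why the global operations restrict faithfully to the block and why reduced columns stay reduced is sound bookkeeping that the paper leaves implicit.
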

		
		\begin{proof}  Assume that $  1\leq k<r $ and  that $ a_{11}, a^{(1)}_{22},\dots, a^{(2k-2)}_{kk}$ are all non-zero. Then we may apply the Gauss-Jordan operations up to $ 2k $ and obtain
			\begin{align*}
				m_{k+1}=&\det
				\begin{bmatrix}
					a_{11} & \cdots & a_{1k} & a_{1k+1} \\ 
					\vdots & \ddots & \vdots & \vdots \\ 
					a_{k1} & \cdots & a_{kk} & a_{kk+1} \\ 
					a_{k+11} & \cdots & a_{k+1k} & a_{k+1k+1}%
				\end{bmatrix}\\
				=&a_{11}\cdots a^{(2k-1)}_{kk}\det\begin{bmatrix}
					1 & \cdots & 0 & a^{(2k-1)}_{1k+1} \\ 
					\vdots & \ddots & \vdots & \vdots \\ 
					0 & \cdots & 1 & a^{(2k-1)}_{kk+1} \\ 
					0 & \cdots & 0 & a^{(2k)}_{k+1k+1}%
				\end{bmatrix}.%
			\end{align*}
			The Laplace-expansion applied to the last row yields 	
			$$ m_{k+1}=a_{11}\cdots a^{(2k-1)}_{kk}a^{(2k)}_{k+1k+1}\det 
			\begin{bmatrix}
				1 & \cdots & 0  \\ 
				\vdots & \ddots & \vdots  \\ 
				0 & \cdots & 1 		
			\end{bmatrix} =a_{11}\cdots a^{(2k-1)}_{kk}a^{(2k)}_{k+1k+1}. $$
			Using induction and formula \eqref{mkprod}  we derive the last part of the Lemma.
		\end{proof}
		
		\begin{proposition}\label{propa2k}
			Let $A=[a_{ij}]_{m\times n}\in M_{m,n}(\mathbb{R}) $ be of \rank $ r>1$. Then $A$ is  diagonally eliminable up to $r$ if and only if $m_1,\dots, m_r\neq 0$. In both cases, for $ 1\leq k<r $ it holds that 
			\begin{equation}\label{a2k}
				a^{(2k)}_{k+1k+1}=\frac{m_{k+1}}{m_{k}}. 
			\end{equation}
		\end{proposition}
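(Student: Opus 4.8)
The plan is to obtain both assertions directly from Lemma \ref{lemmkprod}, which already packages the real work (the determinant simplification and the accompanying induction) into the product formula \eqref{mkprod} together with its nonvanishing consequence. The biconditional will be read off from the consequence of the lemma at the top index $k=r-1$, and the pivot formula \eqref{a2k} will come from dividing two consecutive instances of \eqref{mkprod}.

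For the equivalence, I would first unravel the definition: by Definition \ref{matrixofgaussJordanelimination1}, $A$ is diagonally eliminable up to $r$ exactly when the pivots $a_{11}, a^{(2)}_{22}, \dots, a^{(2r-2)}_{rr}$, that is $a^{(2k-2)}_{kk}$ for $1\le k\le r$, are all nonzero. This is precisely the left-hand list appearing in the consequence of Lemma \ref{lemmkprod} taken at $k=r-1$, whose right-hand side is $m_1,\dots,m_r\neq 0$. Hence the two conditions are equivalent, and the first part of the proposition is immediate.

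For the pivot formula, fix $1\le k<r$ and work in the (now established) case $m_1,\dots,m_r\neq 0$, so in particular $m_k\neq 0$. Formula \eqref{mkprod} reads $m_{k+1}=a_{11}a^{(2)}_{22}\cdots a^{(2k-2)}_{kk}\,a^{(2k)}_{k+1k+1}$, while the same formula at the index $k-1$ (together with $m_1=a_{11}$ to cover the base case $k=1$) gives $m_k=a_{11}a^{(2)}_{22}\cdots a^{(2k-2)}_{kk}$. Dividing, every factor of the common product cancels and one is left with $a^{(2k)}_{k+1k+1}=m_{k+1}/m_k$, which is \eqref{a2k}. Since the equivalence shows the hypotheses ``diagonally eliminable up to $r$'' and ``$m_1,\dots,m_r\neq 0$'' coincide, this settles the phrase ``in both cases''.

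The one genuine subtlety, and the step I would treat most carefully, is the backward direction of the equivalence together with the well-definedness it presupposes. Writing $a^{(2k)}_{k+1k+1}$ already requires that the first $k$ Gauss-Jordan steps can be performed, i.e. that the earlier pivots are nonzero; so one cannot simply assume $m_1,\dots,m_r\neq 0$ and ``divide'' without first knowing that the procedure reaches stage $2k$. This is exactly why Lemma \ref{lemmkprod} proves its consequence by induction: nonvanishing of $m_1,\dots,m_k$ bootstraps nonvanishing of the first $k$ pivots, which in turn legitimizes \eqref{mkprod} and forces $a^{(2k)}_{k+1k+1}=m_{k+1}/m_k\neq 0$ whenever $m_{k+1}\neq 0$. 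As that induction is already discharged inside the lemma, the proposition only needs to invoke it at the appropriate index, and no further case analysis is required.
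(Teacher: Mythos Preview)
Your proposal is correct and follows essentially the same route as the paper: invoke the consequence of Lemma~\ref{lemmkprod} at $k=r-1$ for the biconditional, and divide two consecutive instances of \eqref{mkprod} to obtain \eqref{a2k}. Your additional paragraph on well-definedness in the backward direction only makes explicit what the paper leaves to the inductive clause of the lemma; it is not a different argument.
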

		\begin{proof}
			Applying Lemma  \ref{lemmkprod} with $k=r-1$  we derive that $A$ is  diagonally eliminable up to $r$ if and only if $m_1,\dots, m_r\neq 0$. Then \eqref{a2k} follows from \eqref{mkprod} applied to $ k+1 $ and $ k $.
		\end{proof}
		Next theorem is the main theorem and gives formulas for the entries 
		$ a^{(2k)}_{ij}$ of the matrices  $A^{(2k)}$. The formulas are similar to \eqref{a2k} below to the right of the pivots, and above to the right they come with alternating sign. Again they are proved by simplifying determinants. 
		\begin{theorem}[Explicit expressions for the Gauss-Jordan intermediate matrices]
			\label{cmcttruyhoi} Let 
			$A= [a_{ij}]_{m\times n}\in M_{m,n}(\mathbb{R})$ be of \rank $ r>1$ and  diagonally eliminable up to $r$. Let $k<r$. Then 
			\begin{equation*}
				A^{(2k)}=%
				\begin{bmatrix}
					1 & \cdots  & 0 & a_{1k+1}^{(2k)} & \cdots  & a_{1n}^{(2k)} \\ 
					\vdots  & \ddots  & \vdots  & \vdots  & \ddots  & \vdots  \\ 
					0 & \cdots  & 1 & a_{kk+1}^{(2k)} & \cdots  & a_{kn}^{2k} \\ 
					0 & \cdots  & 0 & a_{k+1k+1}^{(2k)} & \cdots  & a_{k+1n}^{(2k)} \\ 
					\vdots  & \ddots  & \vdots  & \vdots  & \ddots  & \vdots  \\ 
					0 & \cdots  & 0 & a_{mk+1}^{(2k)} & \cdots  & a_{mn}^{(2k)}%
				\end{bmatrix},%
			\end{equation*}%
			where 
			\begin{equation}
				a_{ij}^{(2k)}=%
				\begin{cases}
					(-1)^{k+i} \dfrac{m^{1\dots k}_{1\dots i-1i+1\dots kj}}{m_{k}}%
					\medskip  & \mbox{ if }1\leq i\leq k, k+1\leq j\leq n \\ 
					
					\qquad \qquad \dfrac{m^{1\dots ki}_{1\dots kj}}{m_{k}} & \mbox{ if }k+1\leq i\leq m,k+1\leq j\leq n
				\end{cases}.%
				\label{congthucquynapcuaA}
			\end{equation}%
		\end{theorem}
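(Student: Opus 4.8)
The plan is to extend Gantmacher's device of \emph{simplifying a minor by running the elimination on it}, already used in Lemma \ref{lemmkprod}, from the single pivot product to every entry of $A^{(2k)}$. Before touching the entry formulas I would first record, by a routine induction on the number of Gauss--Jordan steps, the block structure displayed in the statement: that the top-left $k\times k$ corner of $A^{(2k)}$ equals $I_k$, that $a^{(2k)}_{il}=0$ whenever $l\le k$ and $i\neq l$, and — crucially for what follows — that rows $1,\dots,k$ of the transition matrix $P=\mathcal{G}_{2k}\cdots\mathcal{G}_{1}$ involve only rows $1,\dots,k$ of $A$, while each row $i>k$ of $P$ involves only rows $1,\dots,k,i$. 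Diagonal eliminability up to $r$ together with Proposition \ref{propa2k} guarantees that the pivots $a_{11},a^{(2)}_{22},\dots,a^{(2k-2)}_{kk}$ are nonzero and that $m_k\neq 0$, so everything below is well defined. The idea is then to realise each numerator minor as the determinant of a submatrix of $A$ and to read off the entry after applying the relevant restricted operations.

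The mechanism common to both cases is the following. Fix the set $R$ of rows occurring in the minor under consideration and the set $C$ of its columns. By the support property recorded above, $(A^{(2k)})^{R}_{C}=P_{R,R}\,A^{R}_{C}$, so the transformed submatrix is obtained by applying to $A^{R}_{C}$ exactly the row operations $\mathcal{G}_{1},\dots,\mathcal{G}_{2k}$ restricted to the rows of $R$ (operations with coefficients drawn from columns outside $C$ are still legitimate, being row additions with fixed scalars that act column by column). Among these restricted operations only the scalings $\mathcal{G}_{1},\mathcal{G}_{3},\dots,\mathcal{G}_{2k-1}$, multiplying rows $1,\dots,k$ by $1/a_{11},\dots,1/a^{(2k-2)}_{kk}$, change the determinant; the even operations are row additions and leave it unchanged. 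Hence $\det P_{R,R}=1/(a_{11}a^{(2)}_{22}\cdots a^{(2k-2)}_{kk})=1/m_k$ by Lemma \ref{lemmkprod} (with $k-1$ in place of $k$), and $\det\big((A^{(2k)})^{R}_{C}\big)=\tfrac{1}{m_k}\det\big(A^{R}_{C}\big)$.

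It remains to evaluate the transformed submatrices, which is where the structural facts enter. For the lower entries ($k+1\le i\le m$) I take $R=\{1,\dots,k,i\}$ and $C=\{1,\dots,k,j\}$, so $\det A^{R}_{C}=m^{1\dots k i}_{1\dots k j}$; after the operations the top-left $k\times k$ corner is $I_k$, the last row is $(a^{(2k)}_{i1},\dots,a^{(2k)}_{ik},a^{(2k)}_{ij})=(0,\dots,0,a^{(2k)}_{ij})$, so the matrix is block triangular with determinant $a^{(2k)}_{ij}$, giving $a^{(2k)}_{ij}=m^{1\dots k i}_{1\dots k j}/m_k$. For the upper entries ($1\le i\le k$) I take $R=\{1,\dots,k\}$ and $C=\{1,\dots,i-1,i+1,\dots,k,j\}$, so $\det A^{R}_{C}=m^{1\dots k}_{1\dots i-1 i+1\dots k j}$; after the operations the columns coming from $\{1,\dots,k\}\setminus\{i\}$ are the standard vectors $e_{1},\dots,e_{i-1},e_{i+1},\dots,e_{k}$ and the last column is $(a^{(2k)}_{1j},\dots,a^{(2k)}_{kj})^{\mathsf T}$. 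Expanding along row $i$, whose only nonzero entry is $a^{(2k)}_{ij}$ in the last ($k$-th) column, yields $(-1)^{k+i}a^{(2k)}_{ij}$ times the determinant of an identity matrix, i.e. $(-1)^{k+i}a^{(2k)}_{ij}$. Equating with $m^{1\dots k}_{1\dots i-1 i+1\dots k j}/m_k$ and multiplying by $(-1)^{k+i}$ gives the alternating-sign formula.

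The main obstacle I anticipate is twofold and lives entirely in the upper case. First, one must make fully rigorous the support property $(A^{(2k)})^{R}_{C}=P_{R,R}A^{R}_{C}$, in particular that discarding column $i$ from $C$ causes no damage even though the clearing step $\mathcal{G}_{2i}$ reads its multipliers off that very column; this is intuitively clear but deserves a careful induction, ideally folded into the proof of the block structure. Second, the sign $(-1)^{k+i}$ requires attention: it arises precisely because the replaced column $j$ is listed \emph{last} in the minor while the deleted column $i$ leaves row $i$ as the cofactor row. Everything else reduces to the determinant-scaling identity already supplied by Lemma \ref{lemmkprod}.
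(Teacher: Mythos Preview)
Your approach is essentially identical to the paper's: for each entry you form the same submatrix (rows $\{1,\dots,k,i\}$ and columns $\{1,\dots,k,j\}$ in the lower block, rows $\{1,\dots,k\}$ and columns $\{1,\dots,i-1,i+1,\dots,k,j\}$ in the upper block), apply the first $2k$ Gauss--Jordan operations, use Lemma~\ref{lemmkprod} for the factor $m_k$, and expand along row $i$ to read off $a^{(2k)}_{ij}$. The only difference is packaging---you make the restriction explicit via the transition matrix $P$ and its support property, whereas the paper simply writes ``applying the first $2k$ Gauss--Jordan operations to $V_{i,j}$'' without further comment, so the obstacle you flag is precisely the step the paper leaves implicit.
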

		\begin{proof}
			Firstly, let $ k+1\leq i\leq m$ and $k+1\leq j\leq n$. Let 
			\begin{equation*}
				U_{i,j}=
				\begin{bmatrix}
					a_{11} & \cdots & a_{1k} & a_{1j} \\ 
					\vdots & \ddots & \vdots & \vdots \\ 
					a_{k1} & \cdots & a_{kk} & a_{kj} \\ 
					a_{i1} & \cdots & a_{ik} & a_{ij}%
				\end{bmatrix}.%
			\end{equation*}
			Then $\det \left(U_{i,j}\right)=m^{1\dots ki}_{1\dots kj}.$  By applying the first $ 2k $ Gauss-Jordan operations to $ U_{i,j} $, we obtain
			\begin{equation*}
				\det \left(U_{i,j}\right)=a_{11}\cdots a^{(2k-2)}_{kk}\det\begin{bmatrix}
					1 & \cdots & 0 & a^{(2k-1)}_{1j} \\ 
					\vdots & \ddots & \vdots & \vdots \\ 
					0 & \cdots & 1 & a^{(2k-1)}_{kj} \\ 
					0 & \cdots & 0 & a^{(2k)}_{ij}%
				\end{bmatrix}=m_{k}a^{(2k)}_{ij}.
			\end{equation*}
			Hence $ a^{(2k)}_{ij}=\frac{m^{1\dots ki}_{1\dots kj}}{m_{k}} $.
			
			Secondly, we let $1\leq i<k+1$ and $k+1\leq j\leq n$. Let
			\begin{equation*}
				V_{i,j}=%
				\begin{bmatrix}
					a_{11} & \cdots & a_{1i-1} & a_{1i+1} & \cdots & a_{1k} & a_{1j} \\ 
					\vdots & \ddots & \vdots & \vdots & \ddots & \vdots & \vdots \\ 
					a_{i-11} & \cdots & a_{i-1i-1} & a_{i-1i+1} & \cdots & 
					a_{i-1k} & a_{i-1j} \\ 
					a_{i1} & \cdots & a_{ii-1} & a_{ii+1} & \cdots & a_{ik} & a_{ij} \\ 
					a_{i+11} & \cdots & a_{i+1i-1} & a_{i+1i+1} & \cdots & 
					a_{i+1k} & a_{i+1j} \\ 
					\vdots & \ddots & \vdots & \vdots & \ddots & \vdots \vdots &  \\ 
					a_{k1} & \cdots & a_{ki-1} & a_{ki+1} & \cdots & 
					a_{kk} & a_{kj}%
				\end{bmatrix}.%
			\end{equation*}
			Then
			\begin{equation*}
				\det(V_{i,j})=m^{1 \dots k}_{1\dots i-1i+1\dots kj}.
			\end{equation*}
			Let $ V_{i,j}^{\prime } $ be the matrix obtained by applying the first $ 2k $ Gauss-Jordan operations to $ V_{i,j} $. Then, using \eqref{mkprod},
			\begin{alignat*}{1}
				\det(V_{i,j} )=a_{11}\cdots  a^{(2k-2)}_{kk}\det
				\begin{bmatrix}
					1 & \cdots & 0 & 0 & \cdots & 0 & a^{(2k)}_{1j} \\ 
					\vdots & \ddots & \vdots & \vdots & \ddots & \vdots & \vdots \\ 
					0 & \cdots & 1 & 0 & \cdots & 0 & a_{i-1j}^{(2k)} \\ 
					0 & \cdots & 0 & 0 & \cdots & 0 & a_{ij}^{(2k)} \\ 
					0 & \cdots & 0 & 1 & \cdots & 0 & a_{i+1j}^{(2k)} \\ 
					\vdots & \ddots & \vdots & \vdots & \ddots & \vdots & \vdots \\ 
					0 & \cdots & 0 & 0 & \cdots & 1 & a_{kj}^{(2k)}%
				\end{bmatrix}=m_{k}\det(V_{i,j}^{\prime }).%
			\end{alignat*}
			Expanding $ \det(V_{i,j}^{\prime }) $ along the $i^{th}$ row, we derive that 
			\begin{equation*}  \label{dinh huc phu cong thuc gauss1}
				\det(V_{i,j}^{\prime })=(-1)^{i+k}a_{ij}^{(2k)}.
			\end{equation*}
			Combining, we conclude that $ a_{ij}^{(2k)}=
			(-1)^{i+k} \dfrac{m^{1\dots k}_{1\dots i-1i+1\dots kj}}{m_{k}} $. 
		\end{proof}	
		
		The next theorem gives explicit formulas for the matrices $ \G_{p} $ associated to the Gauss-Jordan operations. At odd order $ q=2k+1 $ we have to divide the row $ k+1 $ by $ a^{(2k)}_{k+1k+1} $ as given by \eqref{a2k}, and at even order $ q=2k+2 $, in the column $ j= k+1 $ we have to subtract by $ a^{(2k)}_{k+1j} $ as given by \eqref{congthucquynapcuaA}. 
		
		\begin{theorem}[Explicit expressions for the Gauss-Jordan operation matrices]\label{Gmat}Let \linebreak $A=[a_{ij}]_{m\times n}\in M_{m,n}(\mathbb{R}) $ be of \rank $ r>1$ and diagonally eliminable up to $ r $. For $ k<r $ the Gauss-Jordan operation matrix of odd order
			$ 	\mathcal{G}_{2k+1}=[	g_{ij}^{(2k+1)}]_{m\times m}$ 	satisfies
			\begin{equation}\label{Gmatodd}
				g_{ij}^{(2k+1)}=%
				\begin{cases}
					\quad 1 & \mbox{ if }i=j\not=k+1 \\ 
					\quad 0 & \mbox{ if }i\not=j \\ 
					\dfrac{m_{k}}{m_{k+1}} & \mbox{ if }i=j=k+1%
				\end{cases}.%
			\end{equation}
			and the Gauss-Jordan operation matrix of even order
			$
			\begin{array}{rr}
				\mathcal{G}_{2k+2}=%
				\begin{bmatrix}
					g_{ij}^{(2k+2)}%
				\end{bmatrix}%
				_{m\times m} 
			\end{array}%
			$ satisfies
			\begin{equation*}
				g_{ij}^{(2k+2)}=%
				\begin{cases}\qquad \qquad \qquad 0 & \mbox{ if }j\not \in \{i,k+1\} \\ 
					\qquad \qquad \qquad 1 & \mbox{ if }i=j \\ 
					(-1)^{k+i+1} \dfrac{m^{1\dots k}_{1\dots i-1i+1\dots k+1}}{m_{k}}%
					\medskip  & \mbox{ if }1\leq i\leq k,j= k+1 \\ 
					\qquad \qquad -\dfrac{m^{1\dots ki}_{1\dots kj}}{m_{k}} & \mbox{ if }k+1<i\leq m,j= k+1%
				\end{cases}.%
			\end{equation*}
			
		\end{theorem}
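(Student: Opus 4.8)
The plan is to obtain both formulas by reading the entries straight off Definition \ref{defgau} and then substituting the minor expressions already established in Proposition \ref{propa2k} and Theorem \ref{cmcttruyhoi}. For the odd-order matrix $\mathcal{G}_{2k+1}$, the entries with $i\neq j$, and the diagonal entries with $i=j\neq k+1$, coincide verbatim with their definition, so only the $(k+1,k+1)$ entry requires work. By Definition \ref{defgau} this entry equals $1/a^{(2k)}_{k+1k+1}$, and formula \eqref{a2k} of Proposition \ref{propa2k} gives $a^{(2k)}_{k+1k+1}=m_{k+1}/m_{k}$; inverting yields $g^{(2k+1)}_{k+1k+1}=m_{k}/m_{k+1}$, which is exactly \eqref{Gmatodd}.

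For the even-order matrix $\mathcal{G}_{2k+2}$, the entries with $j\notin\{i,k+1\}$ and the diagonal entries are again unchanged from Definition \ref{defgau}, so I would concentrate on the entries $g^{(2k+2)}_{ik+1}$ with $i\neq k+1$, which by definition equal $-a^{(2k+1)}_{ik+1}$. The crucial bookkeeping observation is that $\mathcal{G}_{2k+1}$ merely rescales row $k+1$ and fixes every other row of $A^{(2k)}$, whence $a^{(2k+1)}_{ik+1}=a^{(2k)}_{ik+1}$ for all $i\neq k+1$. I can then apply Theorem \ref{cmcttruyhoi} with the column index specialized to $j=k+1$, splitting into the two ranges appearing there. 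For $1\leq i\leq k$ the theorem gives $a^{(2k)}_{ik+1}=(-1)^{k+i}m^{1\dots k}_{1\dots i-1i+1\dots k+1}/m_{k}$, so negating introduces the sign $(-1)^{k+i+1}$ and reproduces the stated entry; for $k+1<i\leq m$ the theorem gives $a^{(2k)}_{ik+1}=m^{1\dots ki}_{1\dots k k+1}/m_{k}$, whose negative is precisely the claimed formula (with $j=k+1$).

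The main, and essentially only, obstacle is notational rather than conceptual: one must check that $\mathcal{G}_{2k+1}$ genuinely leaves rows $i\neq k+1$ untouched, so that the passage from $A^{(2k+1)}$ to $A^{(2k)}$ in column $k+1$ is legitimate, and that the minor index sets and the sign $(-1)^{k+i}$ of Theorem \ref{cmcttruyhoi} collapse correctly once the free column index $j$ is fixed at $k+1$. Once these two points are verified, both cases of the formula for $\mathcal{G}_{2k+2}$, together with \eqref{Gmatodd}, follow immediately and no further computation is needed.
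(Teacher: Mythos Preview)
Your proposal is correct and follows essentially the same approach as the paper, which simply cites formulas \eqref{ct g2k+1}, \eqref{ct g2k+2} and Theorem \ref{cmcttruyhoi}. You have helpfully spelled out the one substantive intermediate step the paper leaves implicit, namely that $\mathcal{G}_{2k+1}$ fixes all rows $i\neq k+1$ so that $a^{(2k+1)}_{ik+1}=a^{(2k)}_{ik+1}$, allowing Theorem \ref{cmcttruyhoi} to be invoked directly.
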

		
		\begin{proof}
			The theorem follows from formulas \eqref{ct g2k+1}, \eqref{ct g2k+2} and Theorem \ref{cmcttruyhoi}. 
		\end{proof} 
		
		
		Applying Theorem \ref{cmcttruyhoi} to a diagonally eliminable $ n\times n $ matrix we find at the end the identity matrix $I_n$. As a result the product of the Gauss-Jordan operation matrices is equal to the inverse matrix.
		\begin{corollary}
			Let $A=[a_{ij}]_{n\times n}$ be a  diagonally eliminable matrix. Then $\G A=I_n$ and $\G= A^{-1}$. 
		\end{corollary}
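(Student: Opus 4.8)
The plan is to recognise the product $\G A$ as the last Gauss-Jordan intermediate matrix and to show, via Theorem \ref{cmcttruyhoi}, that this matrix equals $I_n$; the identity $\G=A^{-1}$ then follows from invertibility of $A$.

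First I would record what diagonal eliminability means here. Since $A$ is $n\times n$, Definition \ref{matrixofgaussJordanelimination1} forces $r=m=n$, and by Proposition \ref{propa2k} all principal minors $m_1,\dots,m_n$ are non-zero. By the definition $\G=\G_{2r}\G_{2r-1}\cdots\G_1=\G_{2n}\cdots\G_1$ we have $\G A=A^{(2n)}=A^{(2r)}$, so it suffices to prove $A^{(2r)}=I_n$. Next I would apply Theorem \ref{cmcttruyhoi} with $k=r-1=n-1$. This shows that $A^{(2(n-1))}$ has $I_{n-1}$ in its top-left $(n-1)\times(n-1)$ corner, zeros in row $n$ across the first $n-1$ columns, and a single remaining column $j=n$ with entries $a^{(2(n-1))}_{in}$. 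By Proposition \ref{propa2k} the last pivot equals $a^{(2(n-1))}_{nn}=m_n/m_{n-1}\neq 0$.

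The only part not directly covered by Theorem \ref{cmcttruyhoi} (which is stated for $k<r$) is the final pair of operations, and this I would carry out by hand. Applying \eqref{ct g2k+1} with $k=n-1$, the operation $\G_{2n-1}$ scales row $n$ by $1/a^{(2(n-1))}_{nn}$, turning it into the row vector that is $1$ in position $n$ and $0$ elsewhere. Applying \eqref{ct g2k+2}, the operation $\G_{2n}$ subtracts the appropriate multiples of this row from rows $1,\dots,n-1$ to clear the entries of column $n$ above the diagonal; because row $n$ now has a single non-zero entry, these subtractions leave the $I_{n-1}$ block and the zero entries intact. Hence $A^{(2n)}=I_n$ and $\G A=I_n$. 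Finally, since $A$ has full rank $n$ it is invertible, so multiplying $\G A=I_n$ on the right by $A^{-1}$ yields $\G=A^{-1}$.

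The main (and only mild) obstacle is exactly this last step: Theorem \ref{cmcttruyhoi} stops short at $k<r$, so one must check directly that the surviving column is eliminated cleanly and that the identity block is not disturbed. This is routine, as at that stage a single column remains and its structure is completely forced.
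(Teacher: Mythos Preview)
Your proof is correct and follows the same route as the paper, which also derives $\G A=A^{(2n)}=I_n$ from Theorem~\ref{cmcttruyhoi} (after noting via Theorem~\ref{Gmat} that all the $\G_q$ are defined) and then concludes $\G=A^{-1}$. In fact you are more careful than the paper: the paper simply asserts $A^{(2n)}=I_n$ ``by Theorem~\ref{cmcttruyhoi}'' without commenting on the case $k=r$, whereas you correctly observe that the theorem only gives $A^{(2(n-1))}$ explicitly and then verify by hand that the final two operations $\G_{2n-1},\G_{2n}$ turn the single remaining column into the last column of $I_n$ without disturbing the $I_{n-1}$ block.
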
 
		\begin{proof}
			By Theorem \ref{Gmat}, the matrices $\G_q$ are well-defined for all $1\leq q\leq 2n$. Then $\G A=A^{(2n)}=I_n$ by Theorem \ref{cmcttruyhoi}. Hence $\G=A^{-1}$.
		\end{proof} 
		
		Theorem \ref{cmcttruyhoi} holds under the condition that the matrix is diagonally eliminable, which is equivalent to asking that the principal minors are non-zero. Alternatively we may ask that the absolute values of the principal minors $m_{k+1}$ are maximal with respect to minors of the same size which share the first $ k $ rows and columns. It well-known that by appropriately changing rows and columns this may always be achieved, and then we speak of properly arranged matrices. We will use Theorem \ref{cmcttruyhoi} to show that the principal minors of properly arranged matrices are non-zero, which implies that they are diagonally eliminable. From a numerical point-of-view we are better off, the pivots being maximal.
		
		\begin{definition}
			\label{matrixofgaussJordanelimination} Assume $A=[a_{ij}]_{m\times n}\in M_{m,n}(\mathbb{R}%
			) $ has \rank $r\geq 1$. Then $A$ is called \emph{properly arranged}, if 
			\begin{equation} \label{dk2} 
				|a_{ij}|\leq |a_{11}| \mbox{ for all } 1\leq i\leq m, 1\leq j\leq n,
			\end{equation}  and for every $ k\in \N  $ such that $ 1\leq k< \min\{m,n\}$	
			\begin{equation}
				\left \vert m_{1\cdots k j}^{1\cdots ki}\right \vert \leq \left \vert m_{k+1}\right \vert \mbox{ for all } k+1\leq i\leq m, k+1\leq j\leq n.
				\label{maximumofprincipleminors}
			\end{equation}
		\end{definition}

		\begin{proposition}
			\label{changematran moi} Assume $A=[a_{ij}]_{m\times n}\in M_{m,n}(\mathbb{R}	) $ has \rank $r\geq 1$. By  changing rows and columns of $A$, if necessary, we may obtain that $ A $ is properly arranged.
		\end{proposition}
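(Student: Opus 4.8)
The plan is to build the proper arrangement by a finite greedy procedure that fixes the first row and column, then the second, and so on, each time bringing a minor of maximal absolute value into the principal position by permuting only the rows and columns that have not yet been fixed. Throughout I would keep in mind that swapping two rows (or two columns) is an invertible operation, so the rank stays equal to $r$, and no minor changes by more than a sign.

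First I would treat the base step. Since $\rank(A)=r\geq 1$, some entry is nonzero, so $\max_{i,j}|a_{ij}|$ is attained, say at $(i_0,j_0)$; swapping row $i_0$ with row $1$ and column $j_0$ with column $1$ yields $|a_{11}|=\max_{i,j}|a_{ij}|$, which is precisely condition \eqref{dk2}. Next I would set up the induction. Suppose that for some $0\leq k<\min\{m,n\}$ the rows and columns have already been permuted so that \eqref{maximumofprincipleminors} holds for every earlier stage $k'<k$ (the stage $k'=0$ being \eqref{dk2}). The minors $m^{1\cdots k\,i}_{1\cdots k\,j}$ sharing the first $k$ rows and columns, with $i\in\{k+1,\dots,m\}$ and $j\in\{k+1,\dots,n\}$, form a finite family, so their absolute values attain a maximum at some pair $(i^\ast,j^\ast)$. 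Swapping row $i^\ast$ with row $k+1$ and column $j^\ast$ with column $k+1$ turns that minor into the principal minor $m_{k+1}$, so that $|m^{1\cdots k\,i}_{1\cdots k\,j}|\leq|m_{k+1}|$ for all $i>k$ and $j>k$; this is exactly \eqref{maximumofprincipleminors} at stage $k$.

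The one real obstacle is to check that this swap does not destroy the inequalities already secured at the earlier stages $k'<k$. Here I would use that the swapped indices $i^\ast,j^\ast,k+1$ are all strictly greater than $k\geq k'+1$, so the first $k'+1$ rows and columns are untouched. Consequently the principal minor $m_{k'+1}$ keeps both its value and its status as the principal minor, while the associated comparison family $\{m^{1\cdots k'\,i}_{1\cdots k'\,j}:i>k',\,j>k'\}$ is merely relabeled among itself (the extra row and column index always remaining in last position, so no sign even intervenes), leaving the multiset of absolute values unchanged. Therefore the inequality $|m^{1\cdots k'\,i}_{1\cdots k'\,j}|\leq|m_{k'+1}|$ continues to hold after the swap.

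Iterating the step for $k=0,1,\dots,\min\{m,n\}-1$ then produces, after finitely many row and column swaps, a matrix that satisfies \eqref{dk2} and \eqref{maximumofprincipleminors} for every admissible $k$; since the swaps preserve the rank, this matrix is a properly arranged matrix of rank $r$, which proves the proposition. I expect the bookkeeping in the previous paragraph—confirming that permutations of the unfixed indices leave the lower-order principal minors and their comparison families invariant in absolute value—to be the crux; once that invariance is in place, the greedy construction and its termination are immediate.
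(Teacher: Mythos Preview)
Your greedy construction is correct, and the invariance check you flagged as the crux is handled properly: since the indices swapped at stage $k$ are all at least $k+1>k'+1$, the principal minor $m_{k'+1}$ is literally unchanged, and the comparison family at stage $k'$ is permuted bijectively (with the extra row and column staying in last position, so indeed without sign), preserving the earlier inequalities.

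As for comparison with the paper: there is nothing to compare against. The paper states this proposition without proof, having already remarked just before Definition~\ref{matrixofgaussJordanelimination} that ``it is well-known that by appropriately changing rows and columns this may always be achieved.'' Your write-up therefore supplies the omitted argument rather than offering an alternative to one.
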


		\begin{theorem}\label{diagonally} 
			Let $A=[a_{ij}]_{m\times n}\in M_{m,n}(\R)$ be  of \rank $r\geq 1$ and properly arranged. Then $A$ is diagonally eliminable up to $r$.  Moreover, $|a^{(2k)}_{k+1k+1}|\geq \left|a^{(2k)}_{ij}\right|$ for all $k$ with   $0\leq k<r$ and $i,j$ with $k+1\leq i \leq m$ and  $k+1\leq j\leq n$.
		\end{theorem}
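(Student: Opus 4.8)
The plan is to prove both assertions simultaneously by induction on $k$, establishing at stage $k$ that the principal minor $m_{k+1}$ is non-zero \emph{and} that the $(k+1,k+1)$-entry of $A^{(2k)}$ dominates the rest of its lower-right block. By Proposition \ref{propa2k}, diagonal eliminability up to $r$ is equivalent to $m_1,\dots,m_r\neq 0$, so it is enough to show that all principal minors up to order $r$ are non-zero; the dominance inequality will come out of the same computation. Throughout I use that $k<r\leq\min\{m,n\}$, so the maximality hypothesis \eqref{maximumofprincipleminors} is applicable for every $k$ with $1\leq k<r$.

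For the base case $k=0$ one has $m_1=a_{11}$, and since $A$ has \rank $\geq 1$ it possesses a non-zero entry; then \eqref{dk2} forces $|a_{11}|\geq|a_{ij}|>0$, so $m_1\neq 0$, while the stated inequality $|a^{(0)}_{11}|\geq|a^{(0)}_{ij}|$ is exactly \eqref{dk2}. For the inductive step I assume $m_1,\dots,m_k\neq 0$ with $1\leq k<r$. Then $A$ is diagonally eliminable up to $k$, so the first $2k$ Gauss-Jordan operations are well-defined, and the portion of the proof of Theorem \ref{cmcttruyhoi} dealing with the lower-right block—which uses only these $2k$ operations—gives $a^{(2k)}_{ij}=m^{1\dots ki}_{1\dots kj}/m_k$ for $k+1\leq i\leq m$ and $k+1\leq j\leq n$; in particular $a^{(2k)}_{k+1k+1}=m_{k+1}/m_k$, as also follows from \eqref{mkprod} applied to $k+1$ and to $k$. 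Dividing \eqref{maximumofprincipleminors} by $|m_k|>0$ then yields $|a^{(2k)}_{ij}|\leq|m_{k+1}|/|m_k|=|a^{(2k)}_{k+1k+1}|$, which is precisely the claimed dominance inequality for this $k$.

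It remains to deduce $m_{k+1}\neq 0$. Here I would argue that the operation matrices $\mathcal{G}_1,\dots,\mathcal{G}_{2k}$ are invertible, each being either diagonal with a non-zero diagonal entry or a unit-determinant row-addition matrix, so $\rank A^{(2k)}=\rank A=r>k$. On the other hand, the first $k$ columns of $A^{(2k)}$ are the standard basis vectors $e_1,\dots,e_k$, and rows $k+1,\dots,m$ vanish on those columns; hence if the entire lower-right block were zero, rows $k+1,\dots,m$ would vanish and $\rank A^{(2k)}\leq k$, a contradiction. Therefore some entry $a^{(2k)}_{ij}\neq 0$ in that block, and the inequality just established gives $|a^{(2k)}_{k+1k+1}|\geq|a^{(2k)}_{ij}|>0$, so $m_{k+1}=m_k\,a^{(2k)}_{k+1k+1}\neq 0$. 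This closes the induction: $A$ is diagonally eliminable up to $r$, and the dominance inequality has been obtained for every $k$ with $0\leq k<r$.

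The main obstacle, and the place where the maximality hypothesis is genuinely indispensable, is the passage from ``the lower-right block is non-zero'' to ``the pivot $a^{(2k)}_{k+1k+1}$ is non-zero'': a non-zero block could a priori have its pivot entry vanish, and it is exactly the domination $|a^{(2k)}_{k+1k+1}|\geq|a^{(2k)}_{ij}|$ coming from \eqref{maximumofprincipleminors} that rules this out. The one subtlety to watch is non-circularity: the block formula $a^{(2k)}_{ij}=m^{1\dots ki}_{1\dots kj}/m_k$ must be invoked under the inductive hypothesis of eliminability up to $k$ only, not up to $r$, which is legitimate because its derivation uses merely the first $2k$ operations.
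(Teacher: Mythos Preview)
Your proof is correct and follows essentially the same approach as the paper: both arguments reduce to showing that if $m_1,\dots,m_k\neq 0$ but $m_{k+1}=0$, then the dominance inequality forces the whole lower-right block of $A^{(2k)}$ to vanish, contradicting $\rank(A^{(2k)})=r>k$; the paper phrases this as a minimal-counterexample argument while you cast it as an explicit induction, which is merely a cosmetic difference. Your explicit remark about non-circularity---that the block formula from Theorem~\ref{cmcttruyhoi} requires only eliminability up to $k$, not up to $r$---is in fact a point the paper leaves implicit when it invokes Theorem~\ref{cmcttruyhoi} inside the proof of Theorem~\ref{diagonally}.
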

		\begin{proof}  
			Suppose  there exists $k$ with   $0\leq k<r$  such that $m_{k+1}=0$. We may also assume that $k$ is the smallest index satisfying this condition. If $m_{1}=0$, also $a_{11}=0$, and then by \eqref{dk2} all entries of $ A $ are zero. Hence $ \rank(A)=0 $, a contradiction. From now on we suppose that $ k\geq 1 $. By Theorem \ref{Gmat} the matrices $\G_1,\dots, \G_{2k}$ are well-defined.  By Theorem \ref{cmcttruyhoi} one has \begin{equation*}
				A^{(2k)}=%
				\begin{bmatrix}
					1 & \cdots  & 0 & a_{1k+1}^{(2k)} & \cdots  & a_{1n}^{(2k)} \\ 
					\vdots  & \ddots  & \vdots  & \vdots  & \ddots  & \vdots  \\ 
					0 & \cdots  & 1 & a_{kk+1}^{(2k)} & \cdots  & a_{kn}^{2k} \\ 
					0 & \cdots  & 0 & a_{k+1k+1}^{(2k)} & \cdots  & a_{k+1n}^{(2k)} \\ 
					\vdots  & \ddots  & \vdots  & \vdots  & \ddots  & \vdots  \\ 
					0 & \cdots  & 0 & a_{mk+1}^{(2k)} & \cdots  & a_{mn}^{(2k)}%
				\end{bmatrix},%
			\end{equation*}%
			where 
			\begin{equation*}
				a_{ij}^{(2k)}=%
				\begin{cases}
					(-1)^{k+i} \dfrac{m^{1\dots k}_{1\dots i-1i+1\dots kj}}{m_{k}}%
					\medskip  & \mbox{ if }1\leq i\leq k, k+1\leq j\leq n \\ 
					
					\qquad \qquad \dfrac{m^{1\dots ki}_{1\dots kj}}{m_{k}} & \mbox{ if } k+1\leq i\leq m, k+1\leq j\leq n
				\end{cases}.%
			\end{equation*}%
			Because $\A$ is properly arranged 
			\begin{equation}\label{akmax}
				\left|a^{(2k)}_{ij}\right|=\left|\dfrac{m^{1\dots ki}_{1\dots kj}}{m_{k}}\right|\leq \left| \dfrac{m_{k+1}}{m_k}\right|=\left|a^{(2k)}_{k+1k+1}\right|.
			\end{equation}	 
			for $k+1 \leq i\leq m, k+1\leq j\leq n$. Because $m_{k+1}=0$, it holds that $a^{(2k)}_{k+1k+1}=\dfrac{m_{k+1}}{m_k}=0$. Then $ a^{(2k)}_{ij}=0 $ for $k+1 \leq i\leq m, k+1\leq j\leq n$. Hence $\rank\left(A^{(2k)}\right)=k<r$. Then also  $\rank(A)=\rank\left(A^{(2k)}\right)=k<r$, a contradiction. Hence for $ 0\leq k<r $ one has $ m_{k+1} \neq 0$, and also \eqref{akmax}.
		\end{proof}
		\begin{corollary}
			Let $A=[a_{ij}]_{n\times n}\in M_{n}(\R)$ be  non-singular and properly arranged. Then $A$ is diagonally eliminable.
		\end{corollary}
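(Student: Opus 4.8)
The plan is to reduce this statement to Theorem \ref{diagonally} by observing that non-singularity forces the matrix to have full rank. First I would note that since $A\in M_n(\R)$ is non-singular, its rank satisfies $r=n\geq 1$, so in particular $r=m=n$ and the chain of inequalities $r=m\leq n$ required in Definition \ref{matrixofgaussJordanelimination1} holds. This is the crucial translation step: it is precisely the equality $r=m=n$ that allows the weaker notion ``diagonally eliminable up to $r$'' to be upgraded to the unqualified notion ``diagonally eliminable''.

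Next I would invoke Theorem \ref{diagonally} directly. Since $A$ is properly arranged and of rank $r=n\geq 1$, that theorem asserts that $A$ is diagonally eliminable up to $r$, i.e.\ $a^{(2k-2)}_{kk}\neq 0$ for all $1\leq k\leq r=n$. Combining this with the fact established in the first step that $r=m=n\leq n$, Definition \ref{matrixofgaussJordanelimination1} then yields immediately that $A$ is diagonally eliminable, completing the argument.

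I do not expect any genuine obstacle here, since the corollary is purely a specialization of Theorem \ref{diagonally} to square non-singular matrices; the only thing to be careful about is bookkeeping of the definitions, namely verifying that ``non-singular'' indeed delivers the hypothesis $r=n$ (so that Theorem \ref{diagonally} applies with $r\geq 1$) and that the conclusion $r=m\leq n$ is exactly what distinguishes full diagonal eliminability from eliminability merely up to $r$. In short, the entire content is the identification $r=m=n$, after which the earlier theorem does all the work.
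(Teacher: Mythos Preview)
Your proposal is correct and follows exactly the paper's approach: observe that non-singularity gives $\rank(A)=n$, then apply Theorem \ref{diagonally}. You are simply more explicit than the paper about the bookkeeping step that $r=m=n$ upgrades ``diagonally eliminable up to $r$'' to ``diagonally eliminable'', but the argument is the same.
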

		\begin{proof}
			Because $A=[a_{ij}]_{n\times n}$ is non-singular, it follows that $\rank(A)=n$. By Theorem \ref{diagonally} the matrix $A$ is diagonally eliminable. 
		\end{proof}

		\bigskip

		
	\end{document}